\documentclass{amsart}
\usepackage{amsmath, amssymb, latexsym,  amsthm, amscd, xy, color}
\usepackage{enumitem}


\usepackage[english]{babel}
\hyphenation{english}


\theoremstyle{plain}
\newtheorem{theorem}{Theorem}[section]
\newtheorem{proposition}[theorem]{Proposition}

\newtheorem{lemma}[theorem]{Lemma}
\theoremstyle{definition}

\newtheorem*{Acknowledgement}{Acknowledgements}

\newtheorem{remark}[theorem]{Remark}

\newtheorem*{alexander-hirschowitz}{\hypertarget{alex-hir}{Alexander-Hirschowitz Theorem}}

\newtheorem*{thm*}{Theorem}

\newcommand{ \rrk }{{\mathrm{rk}_{\mathbb{R}}}}

\newcommand{ \crk }{{\mathrm{rk}_{\mathbb{C}}}}



\begin{document}

\title{Real and complex Waring rank of reducible cubic forms}

\author[E. Carlini]{Enrico Carlini}
\address[E. Carlini]{Department of Mathematical Sciences, Politecnico di Torino, Turin, Italy}
\email{enrico.carlini@polito.it}

\author[C. Guo]{Cheng Guo}
\address[C. Guo]{Centre for Quantum Computation and Intelligent Systems (QCIS), Faculty of Engineering and Information Technology, University of Technology, Sydney (UTS)}
\email{Cheng.Guo@student.uts.edu.au}

\author[E. Ventura]{Emanuele Ventura}
\address[E. Ventura]{Department of Mathematics and Systems Analysis, Aalto University}
\email{emanuele.ventura@aalto.fi}

\begin{abstract}

In this paper, we study the real and the complex Waring rank of reducible cubic forms. In particular, we compute the complex rank of all reducible cubics. In the real case, for all reducible cubics, we either compute or bound the real rank depending on the signature of the degree two factor.

\end{abstract}

\maketitle


\section{Introduction}

Let $\mathbb K$ be a field. Let $F\in\mathbb{K}[x_0,\ldots,x_n]$ be a homogeneous polynomial of degree $d$. The \textit{Waring problem} for $F$ over $\mathbb K$ asks for the least value $s$ such that there exist linear forms $L_1, \ldots, L_s$ over $\mathbb K$, for which $F$ can be written as a sum of powers
\[F=\lambda_1L_1^d+\ldots+\lambda_sL_s^d,\]
where $\lambda_i\in \mathbb K$ for all $i$. Such a value $s$ is called the \textit{Waring rank} over $\mathbb K$, or the $\mathbb{K}$--\textit{rank}, of the form $F$, and it is denoted by $\mbox{rk}_\mathbb{K}(F)$. Note that the rank could be infinite for fields of positive characteristic. Moreover, when $\mathbb K=\mathbb C$ we may assume that $\lambda_i=1$ for al $i$. While, when  $\mathbb{K}=\mathbb R$, we may assume $\lambda_i=\pm 1$ for all $i$.

The notion of Waring rank, and its generalization to the case of tensors, are intensively studied also because of their many applications which include, but are not limited to, Algebraic Complexity Theory \cite{BI}, Signal Processing \cite{GCMT}, and Quantum Information Theory \cite{YCGD,YGD}.
Most applications are concerned with the real and complex cases, that is the cases in which $\mathbb{K}=\mathbb{R}$ or $\mathbb{K}=\mathbb{C}$. We will call $\rrk(F)$, respectively $\crk(F)$, the real, respectively the complex, rank of $F$.

Our knowledge of the Waring rank is very limited even for $\mathbb{K}=\mathbb{R}$ or $\mathbb{K}=\mathbb{C}$. The complex Waring rank, for example, is known for all monomials, see \cite{CCG}, but the real Waring rank is only known in the case of monomials in two variables, see \cite{BCG11, R2013}.

Since the Waring rank of a quadratic form is the rank of its associated matrix, it is natural to consider cubic forms as the next case of interest. However, the degree three case is already beyond our reach, and a complete description of the complex rank is only given when at most three variables are involved. In \cite{LT}, the three variable case is treated using projective changes of coordinates in order to obtain canonical forms of which the complex rank is then computed. In the same paper, a similar idea is used to find the complex rank of some reducible cubic forms in any number of variables.

The structure of the present paper is as follows. In Section \ref{lowerboundsection}, we introduce Theorem \ref{criterion} which is our basic tool to give lower bounds for the rank. In Section \ref{complexsection}, we recover and complete the description of the complex rank of reducible cubic forms given in \cite{LT}, see Theorem \ref{Classification of reducible cubics}. In Section \ref{realsection}, we classify real reducible cubics and we give lower and upper bounds for the real rank in Theorem \ref{realthm}.

\section{Basic facts}\label{basicfactssection}

In the next sections we study the real and the complex rank of reducible cubic forms using the action of the group $\textnormal{GL}(n+1,\mathbb{K})$ for $\mathbb{K}=\mathbb{R},\mathbb{C}$. In particular, we say that forms $F,G\in\mathbb{K}[x_0,\ldots,x_n]$ are {\bf equivalent over }$\mathbb{K}$ if
\[G(x_0,\ldots,x_n)=\lambda F((x_0,\ldots,x_n)A)\]
for some $A\in \textnormal{GL}(n+1,\mathbb{K})$ and for some $\lambda\in\mathbb{K}$ with $\lambda\neq 0$. If the field $\mathbb{K}$ is clear from the context, we will simply say that $F$ and $G$ are {\bf equivalent}.

The crucial remark is that forms equivalent over $\mathbb{K}$ have the same rank over $\mathbb{K}$.

\begin{remark}\label{orthogonalremark}
We use the orthogonal subgroups $\textnormal{O}(n+1,\mathbb{K})\subset \textnormal{GL}(n+1,\mathbb{K})$. Recall that for $\mathbb{K}=\mathbb{R}$ the action of the orthogonal subgroup is transitive, in particular all non-zero linear forms are equivalent up to an element in $\textnormal{O}(n+1,\mathbb{R})$. However, when $\mathbb{K}=\mathbb{C}$, this is no longer true and there are two non-zero equivalence classes. Namely, the linear forms $a_0x_0+\ldots+a_nx_n$ with $\sum_i a_i^2\neq 0$ are equivalent to each other, while the non-zero linear forms with $\sum_i a_i^2=0$ form a disjoint equivalence class.
\end{remark}

Finally, we recall the notion given in \cite{C2004} of a form $F$ {\bf essentially involving } $n+1$ variable. We say that $F$ essentially involves $n+1$ variables if $F$ is not equivalent to a form $G$ in fewer variables.

\section{A lower bound for the rank}\label{lowerboundsection}

In this section we work over a field $\mathbb K$ of characteristic zero. First we have the following lemma.

\begin{lemma}\label{lower bound lemma}

Let $F\in\mathbb{K}[x_0,\ldots,x_n]$ be a form of degree $d$ and set $F_k=\partial F/\partial x_k$ for $0\leq k \leq n$.  If $r$ is the minimal non-negative integer for which there exist linear forms $L_1, \cdots, L_r$, such that
\[ F_k \in <L_1^{d-1}, \cdots, L_s^{d-1}>\]
for $0 \leq k \leq n$, then $\mbox{rk}_{\mathbb K}(F) \geq r$.

\begin{proof}

Let $t=\mbox{rk}_{\mathbb K}(F)$, $F= \sum \limits_{k=1}^{t} L_k^d$ thus
$$F_k= \sum \limits_{k=1}^{t} \partial L_k^d/ \partial x_k
\\= (d-1)\sum \limits_{k=1}^{t} (\partial L_k / \partial x_k) L_k^{d-1}.$$
Hence, for each $k$, $F_k \in <L_1^{d-1}, \cdots, L_{t}^{d-1}>$. By definition of $r$, we have $\mbox{rk}_{\mathbb K}(F) \geq r$.

\end{proof}

\end{lemma}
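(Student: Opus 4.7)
The plan is to start from a minimal Waring decomposition of $F$ and show that its summands witness a family of linear forms with the property defining $r$. Concretely, set $t = \mathrm{rk}_{\mathbb{K}}(F)$ and write
\[
F = \sum_{i=1}^{t} \lambda_i L_i^{d},
\]
with $\lambda_i \in \mathbb{K}$ and $L_i$ linear forms (note that in positive characteristic this step would be problematic, but since we assume characteristic zero it is fine; moreover the scalars $\lambda_i$ can be absorbed into the $L_i$ since $\mathbb{K}$-th powers in characteristic zero are compatible with scaling).

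Next, I would differentiate termwise: since $\partial L_i^d/\partial x_k = d\,(\partial L_i/\partial x_k)\, L_i^{d-1}$ and $\partial L_i/\partial x_k$ is a scalar in $\mathbb{K}$, one obtains
\[
F_k \;=\; \frac{\partial F}{\partial x_k} \;=\; d \sum_{i=1}^{t} \lambda_i \bigl(\partial L_i/\partial x_k\bigr)\, L_i^{d-1},
\]
so $F_k \in \langle L_1^{d-1}, \ldots, L_t^{d-1}\rangle$ for every $0 \leq k \leq n$. Hence the linear forms $L_1,\ldots, L_t$ satisfy the condition in the statement.

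The conclusion then follows immediately from the definition of $r$ as the minimal non-negative integer with the stated property: since $L_1, \ldots, L_t$ is an admissible family, we must have $t \geq r$, i.e., $\mathrm{rk}_{\mathbb{K}}(F) \geq r$. There is no real obstacle here; the only subtle point is that the argument genuinely needs $\mathrm{char}(\mathbb{K}) = 0$ so that the factor $d$ appearing in the differentiation is invertible and the sum-of-$d$-th-powers decomposition is available in the standard normalization.
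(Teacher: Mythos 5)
Your proof is correct and follows essentially the same route as the paper's: take a minimal decomposition $F=\sum_i \lambda_i L_i^d$, differentiate termwise to see that each $F_k$ lies in $\langle L_1^{d-1},\ldots,L_t^{d-1}\rangle$, and invoke the minimality of $r$. (Your constant $d$ in the differentiation is in fact the correct one; the paper's $(d-1)$ is a typo, and in any case the constant is irrelevant for the span membership.)
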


The following result is inspired by the property of quantum states transformation via local operation and classical communication (LOCC) \cite{CDS} \cite{YCGD} \cite{YGD}.

\begin{theorem}\label{criterion}

Let $1\leq p \leq n$ be an integer, $F\in\mathbb{K}[x_0,\ldots,x_n]$ be a form, and set  $F_k=\partial F/\partial x_k$ for $0\leq k \leq n$.
If
\[
\textnormal{rk}_{\mathbb K}(F_0 + \sum_{k=1}^{p} \lambda_k F_k)\geq m
\]
for all $\lambda_k \in \mathbb{K}$
and if the forms $F_1, F_2, \cdots, F_p$ are linearly independent, then \[\textnormal{rk}_{\mathbb K}(F) \geq m+p.\]

\end{theorem}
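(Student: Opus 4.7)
The plan is to mimic and extend the argument of Lemma~\ref{lower bound lemma}. Set $r = \textnormal{rk}_{\mathbb{K}}(F)$ and fix a decomposition $F = \sum_{i=1}^{r} L_i^d$. As in the lemma, each partial derivative becomes
\[
F_k = d \sum_{i=1}^{r} a_{i,k}\, L_i^{d-1},
\]
where $a_{i,k}$ denotes the coefficient of $x_k$ in the linear form $L_i$. For any $\lambda = (\lambda_1,\dots,\lambda_p) \in \mathbb{K}^p$, one therefore has
\[
F_0 + \sum_{k=1}^{p} \lambda_k F_k \;=\; d \sum_{i=1}^{r} c_i(\lambda)\, L_i^{d-1},
\qquad c_i(\lambda) := a_{i,0} + \sum_{k=1}^{p} \lambda_k a_{i,k}.
\]
Since $L_i^{d-1}$ is a single $(d-1)$-th power, the right-hand side is an expression of $F_0 + \sum \lambda_k F_k$ as a sum of at most $|\{i : c_i(\lambda)\neq 0\}|$ such powers, so its $\mathbb{K}$-rank is bounded above by this count. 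The goal reduces to: \emph{produce a choice of $\lambda$ for which at least $p$ of the $c_i(\lambda)$ vanish simultaneously}, since combined with the hypothesis this will force $m \leq r - p$.

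To achieve this, I would study the $r \times p$ matrix $A' = (a_{i,k})_{1\leq i \leq r,\ 1\leq k \leq p}$. The key claim is that $\textnormal{rank}(A') = p$. Indeed, any nontrivial relation $(\alpha_1,\dots,\alpha_p)$ among the columns of $A'$ would give $\sum_{k=1}^{p}\alpha_k a_{i,k} = 0$ for every $i$, and hence $\sum_{k=1}^{p}\alpha_k F_k = d\sum_i \bigl(\sum_k \alpha_k a_{i,k}\bigr)L_i^{d-1} = 0$, contradicting the linear independence of $F_1,\dots,F_p$. So $A'$ has full column rank, which means we can select a subset $S \subset \{1,\dots,r\}$ of size $p$ such that the $p \times p$ submatrix $A'_S = (a_{i,k})_{i\in S,\ 1\leq k\leq p}$ is invertible.

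With such an $S$ in hand, the linear system
\[
a_{i,0} + \sum_{k=1}^{p}\lambda_k a_{i,k} = 0, \qquad i \in S,
\]
is a $p \times p$ system in $(\lambda_1,\dots,\lambda_p)$ with invertible coefficient matrix $A'_S$, hence it admits a unique solution $\lambda^* \in \mathbb{K}^p$. For this $\lambda^*$ we have $c_i(\lambda^*) = 0$ for every $i \in S$, so
\[
F_0 + \sum_{k=1}^{p} \lambda_k^* F_k \;=\; d \sum_{i \notin S} c_i(\lambda^*)\, L_i^{d-1}
\]
is a sum of at most $r - p$ $(d-1)$-th powers. Thus $\textnormal{rk}_{\mathbb{K}}\bigl(F_0 + \sum \lambda_k^* F_k\bigr) \leq r - p$, and the hypothesis gives $m \leq r-p$, i.e.\ $\textnormal{rk}_{\mathbb{K}}(F) = r \geq m + p$.

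The only subtle point I expect is the justification that $\textnormal{rank}(A') = p$: one must resist the temptation to use linear independence of the $L_i^{d-1}$ (which need not hold in general) and instead observe that linear dependencies among the \emph{columns} of $A'$ translate directly into linear dependencies of the $F_k$ as polynomials, regardless of any relations among the $L_i^{d-1}$. Everything else is routine linear algebra on top of the identity for $F_k$ borrowed from Lemma~\ref{lower bound lemma}.
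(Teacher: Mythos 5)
Your proof is correct and follows essentially the same strategy as the paper's: use the linear independence of $F_1,\dots,F_p$ to show that the $p$ columns of coefficients attached to the powers $L_i^{d-1}$ in the derivatives form a full-rank matrix, and then choose $\lambda$ so as to cancel $p$ of the powers appearing in $F_0+\sum_{k}\lambda_kF_k$, leaving at most $\mathrm{rk}_{\mathbb K}(F)-p$ of them. The differences are only organizational: you argue directly from a minimal decomposition of $F$ and solve an explicit $p\times p$ linear system coming from an invertible submatrix, whereas the paper argues by contradiction via Lemma~\ref{lower bound lemma} and performs Gaussian elimination on the matrix expressing $F_1,\dots,F_p$ in terms of the $L_j^{d-1}$; both hinge on exactly the same cancellation.
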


\begin{proof}

Suppose by contradiction that $\mbox{rk}_{\mathbb K}(F) < m+p$. By Lemma \ref{lower bound lemma}, there exist $m+p-1$ linear forms $L_1, L_2, \cdots, L_{m+p-1}$, such that for each $k$, $F_k \in <L_1^{d-1}, \cdots, L_{m+p-1}^{d-1}>$.
Hence, there is a $p\times (m+p-1)$ matrix $M$ of rank $p$, such that
$$
\begin{pmatrix}
F_1\\
F_2\\
\vdots\\
F_p
\end{pmatrix}
= M
\begin{pmatrix}
L_1^{d-1}\\
L_1^{d-1}\\
\vdots\\
L_{m+p-1}^{d-1}\\
\end{pmatrix}.
$$

\noindent Performing Gaussian elimination on $M$, we can decompose $M$ as $M=M_0 M_1$, where $M_0$ is a full rank matrix, which is product of elementary matrices, and $M_1$ has the following form

$$
M_1=
\begin{pmatrix}
1 & \cdots & \cdots & \cdots   & \cdots  & \cdots  \\
0 &   1 &   \cdots  & \cdots   & \cdots  & \cdots \\
0 &   0 &    0      &    0     & 1       & \cdots  \\
\vdots  & \vdots    & \vdots   & \vdots  & \ddots & \vdots \\
0 &   0 &    0      & \cdots   &   0     &      1 & \cdots
\end{pmatrix}.
$$

\noindent Each first non-zero element in each row of $M_1$ is $1$ and these $1$'s are in different columns. Suppose the $1$ in the $k-$th row is in the $h(k)-$th column. We have $1 = h(1) < h(2) < \cdots < h(p) \leq m+p-1$. Let $H=\{ h(1),h(2),\cdots, h(p) \}$. Thus for $1 \leq k \leq p$, there exist $\mu_{k,t}$, with $h(k) < t \leq p$, such that
$L_{h(k)}^{d-1} + \sum \limits_{t=h(k)+1}^{m+p-1} \mu_{k,t} L_{j}^{d-1}
\in <F_1, F_2, \cdots, F_p>$.\\
Suppose $F_0= \sum \limits_{j=1}^{m+p-1} \nu_j L_{j}^{d-1}$. Then we have $F_0= \sum \limits_{j \in H} \nu_j L_{j}^{d-1} + \sum \limits_{j \notin  H} \nu_j L_{j}^{d-1}$. For $j \notin H$, there exists $\bar{\nu_j}$, such that
$\sum \limits_{j \in H} \nu_j L_{j}^{d-1} + \sum \limits_{j \notin  H} \bar{\nu_j} L_{j}^{d-1} \in <F_1,F_2, \cdots, F_p>$.
Therefore
$$F_0= \sum \limits_{j \in H} \nu_j L_{j}^{d-1} + \sum \limits_{j \notin  H} \nu_j L_{j}^{d-1}
= (\sum \limits_{j \in H} \nu_j L_{j}^{d-1} + \sum \limits_{j \notin  H} \bar{\nu_j} L_{j}^{d-1})+
\sum \limits_{j \notin  H} (\nu_j-\bar{\nu_j}) L_{j}^{d-1}.$$
\noindent Then
$$\sum \limits_{j \notin  H} (\nu_j-\bar{\nu_j}) L_{j}^{d-1} = F_0 -( \sum \limits_{j \in H} \nu_j L_{j}^{d-1} + \sum \limits_{j \notin  H} \bar{\nu_j} L_{j}^{d-1} ).$$

\noindent The rank of left-hand side is at most $m+p-1-|H|=m-1$, while the rank of right-hand side is at least $m$ by assumption and this gives a contradiction.

\end{proof}

Theorem \ref{criterion} is particularly effective when dealing with cubic forms and we will use it in Sections \ref{complexsection} and \ref{realsection} to find our lower bounds. In fact, since the partial derivatives of a cubic are quadratic forms, the determination of their rank is equivalent to the computation of the rank of a matrix. However, Theorem \ref{criterion} is also useful when the degree is larger than three, as shown in Proposition \ref{generalization of C}.

\section{Complex rank of reducible cubic forms}\label{complexsection}

In this section we determine the complex rank of all reducible cubic forms in $\mathbb{C}[x_0,\ldots x_n]$ hence completing the result of \cite{LT}. Since the complex rank of monomials is known by \cite{CCG}, we will only consider forms which are not equivalent to monomials.

Since equivalent forms have the same complex rank, we begin with the following classification result.

\begin{lemma}\label{complexclassification}
Let $F\in \mathbb{C}[x_0,\ldots x_n]$ be a form essentially involving $n+1$ variables which is not equivalent to a monomial. If $F$ is a reducible cubic form, then $F$ is equivalent to one and only one of the following forms:

\begin{enumerate}[label=(\roman*)]

\item\label{CubicA}

\[A=x_0(x_0^2+x_1^2+\ldots +x_n^2),\]

and in this case we say that $F$ is of type {\bf A};

\item\label{CubicB}

\[ B=x_0(x_1^2+x_2^2+\ldots +x_n^2),\]

and in this case we say that $F$ is of type {\bf B};

\item\label{CubicC}

\[C=x_0(x_0x_1+x_2^2+x_3^2+x_4^2+\ldots +x_n^2),\]

and in this case we say that $F$ is of type {\bf C}.

\end{enumerate}
\end{lemma}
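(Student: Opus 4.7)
The plan is to put $F$ into a canonical form by exploiting the $\textnormal{GL}(n+1,\mathbb{C})$-action in stages, and then to distinguish the resulting normal forms using a projective invariant.

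Since $F$ is reducible, write $F = L \cdot Q$ with $L$ a nonzero linear form and $Q$ a quadratic form. Acting by a suitable element of $\textnormal{GL}(n+1,\mathbb{C})$ we may assume $L = x_0$. The subgroup preserving this factor acts on $Q$ via: an arbitrary $\textnormal{GL}(n,\mathbb{C})$ on $(x_1,\ldots,x_n)$, shears $x_j\mapsto x_j+v_j x_0$ for $j\geq 1$, and an overall nonzero scalar. Decomposing
\[
Q = c\, x_0^2 + 2 x_0 M(x_1,\ldots,x_n) + Q'(x_1,\ldots,x_n),
\]
I would first use the $\textnormal{GL}(n,\mathbb{C})$-action to diagonalize $Q'$ as $x_1^2 + \cdots + x_r^2$, where $r = \textnormal{rk}(Q')$. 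Each shear then modifies $M$ by $B'(\cdot,v)$, with $B'$ the polar of $Q'$; since the image of this map is $\langle x_1,\ldots,x_r\rangle$, one can kill the components of $M$ in this span, leaving $M$ supported only on $x_{r+1},\ldots,x_n$.

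The proof would then conclude by case-splitting on $r$. If $r = n$ then $M$ is necessarily zero, and the reduction yields $Q = c'\,x_0^2 + x_1^2 + \cdots + x_n^2$ for some $c'$; rescaling produces type A when $c'\neq 0$ and type B when $c' = 0$. If $r = n-1$, then $M = \alpha_n x_n$, and the hypothesis that $F$ essentially involves $n+1$ variables forces $\alpha_n\neq 0$; an extra shear in $x_n$ then eliminates $c$, and rescaling together with a relabeling of variables gives type C. If $r\leq n-2$, the resulting $F$ essentially involves at most $n$ variables (or is equivalent to the monomial $x_0^2 x_1$ when $r = 0$), contradicting the hypotheses.

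The main obstacle is the uniqueness assertion, i.e. showing that A, B, C are pairwise inequivalent. For this I would use the singular locus of the cubic hypersurface $\{F = 0\}$, which is a projective invariant. A direct computation of partial derivatives gives: for type A, the singular locus is a smooth quadric of rank $n$ in the hyperplane $\{x_0 = 0\}$; for type B, it is the same smooth quadric together with the isolated point $[1:0:\cdots:0]$; for type C, it is a singular quadric cone of rank $n-1$ with distinguished vertex $[0:1:0:\cdots:0]$ in $\{x_0 = 0\}$. These three configurations are pairwise non-projectively-equivalent, so A, B, C represent distinct equivalence classes.
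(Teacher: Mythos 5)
Your proposal is correct, and it reaches the classification by a route that is essentially dual to the paper's. The paper first normalizes the \emph{quadratic} factor to $\epsilon x_0^2+\sum_{i=1}^n x_i^2$ with $\epsilon\in\{0,1\}$ and then classifies the position of the \emph{linear} factor using the orbit structure of $\textnormal{O}(n+1,\mathbb C)$ on linear forms (Remark \ref{orthogonalremark}): $\epsilon=0$ gives type {\bf B}, while for $\epsilon=1$ the two orbits $\sum a_i^2\neq 0$ and $\sum a_i^2=0$ give types {\bf A} and {\bf C} respectively. You instead normalize the linear factor to $x_0$ first and then reduce the quadric by its stabilizer (diagonalization of $Q'$ plus shears), splitting on $r=\textnormal{rk}(Q')$; your cases $r=n$ (with $c'\neq 0$ or $c'=0$), $r=n-1$, and $r\leq n-2$ match the paper's {\bf A}, {\bf B}, {\bf C}, and the excluded degenerate forms. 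Your approach is more elementary, needing only completion of squares rather than the two-orbit fact for the complex orthogonal group, and it makes explicit the use of the hypothesis that $F$ essentially involves $n+1$ variables (to force $\alpha_n\neq 0$ and to discard $r\leq n-2$), a point the paper's one-line treatment of the $\epsilon=0$ case glosses over. Moreover, you actually prove the ``one and only one'' assertion via the singular loci (smooth quadric in a hyperplane; the same plus an isolated point; a quadric cone of rank $n-1$), whereas the paper's proof omits this and only hints at the distinguishing invariants (cone or not, tangency or not) in the remark that follows the lemma. The only minor caveat is that for small $n$ some types collapse to monomials (e.g.\ type {\bf B} with $n=2$ is $x_0x_1x_2$) and the quadric components of the singular loci degenerate, so the pairwise-inequivalence check should be stated with a little care there; but in those cases the relevant types are excluded by the hypotheses anyway.
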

\begin{proof}
Clearly, $F$ is equivalent to $L(\epsilon x_0^2 +\sum_{i=1}^n x_i^2)$ where $\epsilon\in\lbrace0,1\rbrace$. If $\epsilon=0$, then $F$ is of type {\bf B}. If $\epsilon=1$, then the type of $F$ depends on the orbit of $L=\sum_{i=0}^n a_ix_i$ under the action of the orthogonal group $\textnormal{O}(n+1,\mathbb C)$ which stabilizes the quadratic factor $\epsilon x_0^2 +\sum_{i=1}^n x_i^2$. As described in Remark \ref{orthogonalremark}, if $\sum_{i=0}^n a_i^2\neq 0$, then $F$ is of type {\bf A}. If $\sum_{i=0}^n a_i^2= 0$, $F$ is equivalent to
\[(x_0+ix_1)(x_0^2+x_1^2+\ldots+x_n^2),\]
and hence $F$ is equivalent to $C$.
\end{proof}

\begin{remark}
We can describe Lemma \ref{complexclassification} in geometric terms. Let $F=LQ$ be a cubic essentially involving $n+1$ variables which is not equivalent to a monomial. If $F$ is of type {\bf A}, then the hypersurface $V(Q)$ is not a cone and the hyperplane $V(L)$ is not tangent to the quadric. If $F$ is of type {\bf B}, then $V(Q)$ is a cone and $V(L)$ does not pass through any vertex of the quadric $V(Q)$; note that if the linear space contains any vertex of the quadric, then we can project from a vertex thus reducing to a case in fewer variables. Hence, we end up again with a reducible cubic either of type {\bf A}, {\bf B} or {\bf C} in less than $n+1$ variables.
If $F$ is of type {\bf C}, then $V(Q)$ is not a cone and $V(L)$ is tangent to the quadric; note that the condition $\sum_{i=0}^n a_i^2=0$ in Remark \ref{orthogonalremark} is equivalent to the tangency condition for the hyperplane $V(\sum_{i=0}^n a_ix_i)$ by the polar properties of quadric $V(\sum_{i=0}^n x_i^2)$.
\end{remark}

We give a complete description of the complex rank of reducible cubic forms. We prove two propositions giving an upper bound and a lower bound on the complex rank. Note that the complex rank for cubics of type {\bf A} and {\bf B} is also given in \cite{LT}, but we produce here independent proofs. Our result on cubics of type {\bf C} is, at the best of our knowledge, new. Note that, B. Segre proved that the cubic surface $V(C)\subset\mathbb P^3$ has rank $7$ and that this is the maximal rank among cubic surfaces \cite{Segre}.

\begin{proposition}\label{Upperbounds}

The cubic forms of types ${\bf A}$, ${\bf B}$, and ${\bf C}$ have complex rank at most $2n$, $2n$, and $2n+1$ respectively.

\begin{proof}

Let $A^{(k)}, B^{(k)}$ and $C^{(k)}$ denote cubic forms of types ${\bf A},{\bf B}$ and ${\bf C}$ in $k+1$ variables. \\
\indent
Let us consider the cubic forms of type ${\bf A}$. For $n=1$, we have
\begin{equation}\label{x0cubopiux0x1square}
A^{(1)}=x_0(x_0^2+x_1^2)={\frac{1}{6\sqrt{3}}}\left [ (\sqrt{3}x_0-x_1)^3+(\sqrt{3}x_0+x_1)^3\right ],
\end{equation}
whose complex rank is $2$. Suppose the statement holds for $k\leq n-1$. Let us consider the form $A^{(n)}=x_0(x_0^2+x_1^2+\ldots +x_n^2)$. If we set $A'=1/6[(x_0+x_1)^3+(x_0-x_1)^3]=1/3x_0^3+x_0x_1^2$ and $A''=x_0(2/3x_0^2+x_2^2+\ldots +x_n^2)$, then $A^{(n)}=A'+A''$. The form $A''$ is equivalent to a cubic form of type ${\bf A}$, and by induction it has complex rank at most $2(n-1)$. Hence we have that \[\crk(A^{(n)})\leq \crk(A')+\crk(A'')\leq 2+2(n-1).\]

Let us consider cubic forms of type ${\bf B}$. Suppose the statement holds for $3\leq k\leq n-1$. Let us consider the form $B^{(n)}=x_0(x_1^2+x_2^2+\ldots +x_n^2)$ and note that $B^{(2)}=x_0(x_1^2+x_2^2)$ has complex rank $4$. If we set
$B'=x_0(x_3^2+\ldots+x_n^2)$, then $B^{(n)}=B^{(2)}+B'$. The form $B'$ is equivalent to a cubic form of type ${\bf B}$ and hence, by induction, it has complex rank at most $2(n-2)$. Hence we have that \[\crk(B^{(n)})\leq 4+2(n-2)=2n.\]

Let us consider cubic forms of type ${\bf C}$ and let $C^{(n)}=x_0(x_0x_1+x_2^2+x_3^2+\ldots+x_n^2)$. Thus
$C^{(n)}=x_0^2x_1+B^{(n-1)}$, where $B^{(n-1)}$ is a form of type ${\bf B}$ in $n$ variables. Hence we have
$\crk(C^{(n)})\leq 3+2(n-1)=2n+1$.

\end{proof}

\end{proposition}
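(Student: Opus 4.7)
The plan is to produce explicit sum-of-cubes decompositions for each type by induction on the number of variables, using the elementary subadditivity $\crk(F+G) \leq \crk(F)+\crk(G)$. For each type, I will split the cubic into two summands, one of small explicit rank and the other of the \emph{same} type in one fewer variable (possibly after a diagonal rescaling of coordinates); each inductive step thus adds a fixed cost to the rank bound, matching the claimed totals.

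For type A the key is the binary cubic $A^{(1)} = x_0(x_0^2+x_1^2)$, which has three distinct complex roots, hence complex rank $2$, realized by the decomposition $A^{(1)} = c\bigl[(\sqrt{3}x_0+x_1)^3+(\sqrt{3}x_0-x_1)^3\bigr]$ for a suitable scalar $c$. For the inductive step, the naive split $A^{(n)} = x_0(x_0^2+x_1^2) + x_0(x_2^2+\ldots+x_n^2)$ leaves a type-B residual, which (as the type-B analysis below shows) already costs $2(n-1)$, too much when $n=2$. Instead I would distribute the $x_0^3$ mass and write $A^{(n)} = \bigl(\tfrac{1}{3}x_0^3 + x_0 x_1^2\bigr) + \bigl(\tfrac{2}{3}x_0^3 + x_0(x_2^2+\ldots+x_n^2)\bigr)$: the first summand is equivalent to $A^{(1)}$ (rank $2$), while the second, after rescaling $x_0 \mapsto \sqrt{3/2}\,x_0$, is equivalent to $A^{(n-1)}$, so by induction $\crk(A^{(n)}) \leq 2+2(n-1) = 2n$.

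For type B the base case is $n=2$: the form $x_0(x_1^2+x_2^2)$ factors over $\mathbb{C}$ as $x_0(x_1+ix_2)(x_1-ix_2)$, three linearly independent forms, so after a linear change of coordinates it becomes the monomial $y_0 y_1 y_2$, whose rank is $4$ via $24\,y_0y_1y_2 = \sum_{\epsilon\in\{\pm1\}^2}\epsilon_1\epsilon_2(y_0+\epsilon_1 y_1+\epsilon_2 y_2)^3$. For $n\geq 3$ the decomposition $B^{(n)} = B^{(2)} + x_0(x_3^2+\ldots+x_n^2)$ expresses $B^{(n)}$ as $B^{(2)}$ plus a type-B form in $n-1$ variables, and induction gives $\crk(B^{(n)}) \leq 4+2(n-2) = 2n$. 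For type C the identity $C^{(n)} = x_0^2 x_1 + x_0(x_2^2+\ldots+x_n^2)$ expresses $C^{(n)}$ as the binary monomial $x_0^2 x_1$ (which has rank $3$, as one checks from $(x_0+x_1)^3-(x_0-x_1)^3 = 6 x_0^2 x_1 + 2 x_1^3$ together with a lower bound via apolarity) plus a type-B cubic in $n$ variables of rank $\leq 2(n-1)$, whence $\crk(C^{(n)}) \leq 2n+1$.

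The main obstacle is the type-A inductive step: one must see that the obvious split is lossy at $n=2$ and that by spreading the coefficient of $x_0^3$ between the two halves one can keep the residual within type A rather than letting it drop to the less economical type B. The decompositions for types B and C are essentially forced once the relevant base cases and monomial ranks are in hand.
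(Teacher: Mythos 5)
Your proposal is correct and follows essentially the same route as the paper: the identical redistribution of the $x_0^3$ term in the type~{\bf A} inductive step, the same reduction of $B^{(n)}$ to $B^{(2)}$ plus a smaller type~{\bf B} form, and the same splitting $C^{(n)}=x_0^2x_1+(\text{type {\bf B}})$. The extra justifications you supply (the monomial decomposition of $y_0y_1y_2$ into four cubes and the rank of $x_0^2x_1$) are correct and only make explicit what the paper cites from the literature.
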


\begin{proposition}\label{Lowerbounds}

The cubic forms of types ${\bf A}, {\bf B}$, and ${\bf C}$ have complex rank at least $2n$, $2n$, and $2n+1$ respectively.

\begin{proof}

Let us consider the cubic form $A$ and Let $A_k$ denote $\partial A/\partial x_k$. The forms $A_1,\ldots,A_n$ are linearly independent and for any $\lambda_k \in \mathbb{C}$, with $1 \leq k \leq n$, we have $\crk(A_0 + \sum_{k=1}^{n} \lambda_k A_k)\geq n$. The last statement follows considering the associated matrix to the quadratic form $A_0 + \sum_{k=1}^{n} \lambda_k A_k$ and noticing that its rank is at least $n$. By Theorem \ref{criterion} with $m=n$ and $p=n$, we have $\crk(A) \geq m+p =2n$. In complete analogy we have that $\crk(B) \geq m+p =2n$. \\
\indent We now consider the cubic form (\ref{CubicC}) of type ${\bf C}$ and let $C_k$ denote $\partial C/\partial x_k$ . The forms $C_1, C_2, \ldots, C_n$ are linearly independent and for any $\lambda_k \in \mathbb{C}$, with $1 \leq k \leq n$, we have $\crk(C_0 + \sum_{k=1}^{n} \lambda_k C_k)=n+1$. The last statement is equivalent to the fact that the following matrix has non-zero determinant

$$
M=\begin{pmatrix}
\lambda_1 & 1 & \lambda_2 & \lambda_3& \lambda_4 & \lambda_5 & \cdots & \lambda_n \\
1       &   0 &    0      &    0      &    0      &    0     & \cdots & 0 \\
\lambda_2 &   0 &    1      &    0      &    0      &    0     & \cdots & 0 \\
\lambda_3 &   0 &    0      &    1      &    0      &    0     & \cdots & 0 \\
\lambda_4 &   0 &    0      &    0      &    1      &    0     & \cdots & 0 \\
\lambda_5 &   0 &    0      &    0      &    0      &    1     & \cdots & 0 \\
\vdots    & \vdots & \vdots & \vdots    &  \vdots   &   \vdots & \ddots & \vdots \\
\lambda_n &   0 &    0      &    0      &    0      &    0     & \cdots & 1
\end{pmatrix}
$$
A direct computation shows that $M$ has rank $n+1$, thus $\crk(C_0 + \sum_{k=1}^{n} \lambda_k C_k)=n+1$ for all $\lambda_k\in \mathbb C$. Hence, by Theorem \ref{criterion}, with $m=n+1$ and $p=n$ we have $\crk(C) \geq m+p =2n+1$. This concludes the proof.

\end{proof}

\end{proposition}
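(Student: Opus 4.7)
The plan is to apply Theorem \ref{criterion} in each of the three cases, taking $p = n$ and letting $F_0, F_1, \ldots, F_n$ be the partial derivatives $\partial F/\partial x_k$ of the relevant cubic $F$. Since $F$ has degree three, each $F_k$ is a quadratic form, so the ranks $\crk(F_0 + \sum \lambda_k F_k)$ coincide with the ranks of explicit symmetric Gram matrices and can be bounded by inspection. The goal is, in each type, to check linear independence of $F_1, \ldots, F_n$ and to produce a lower bound $m$ on $\crk(F_0 + \sum \lambda_k F_k)$ that holds \emph{uniformly} in $(\lambda_1,\ldots,\lambda_n) \in \mathbb{C}^n$; Theorem \ref{criterion} will then give $\crk(F) \geq m+n$.

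For type $\mathbf{A}$, I would compute $A_0 = 3x_0^2 + \sum_{i=1}^n x_i^2$ and $A_k = 2 x_0 x_k$ for $k \geq 1$. The forms $A_1,\ldots,A_n$ are manifestly linearly independent, and the Gram matrix of $A_0 + \sum_{k=1}^n \lambda_k A_k$ has a lower-right $n \times n$ diagonal block equal to $I_n$ (from the $x_i^2$ terms of $A_0$), hence rank at least $n$. Theorem \ref{criterion} with $m = n$ yields $\crk(A) \geq 2n$. The type $\mathbf{B}$ case is completely analogous: taking $B_0 = x_1^2 + \cdots + x_n^2$ only changes the $(0,0)$-entry of the matrix, and the lower-right identity block still forces rank at least $n$.

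The type $\mathbf{C}$ case is where the extra $+1$ appears. Here $C_0 = 2x_0 x_1 + x_2^2 + \cdots + x_n^2$, $C_1 = x_0^2$ and $C_k = 2 x_0 x_k$ for $k \geq 2$, and $C_1,\ldots,C_n$ are clearly linearly independent. The Gram matrix of $C_0 + \sum_k \lambda_k C_k$ has $\lambda_1$ in the $(x_0,x_0)$-entry, a $1$ in the $(x_0,x_1)$-entry (coming from the $2x_0 x_1$ term of $C_0$), the $\lambda_k$ in the $(x_0,x_k)$-entries for $k \geq 2$, and the identity block on the diagonal indexed by $x_2,\ldots,x_n$, with all other entries zero. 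A cofactor expansion along the $x_1$-row, whose only nonzero entry is the $1$ in the $x_0$-column, reduces the $(n+1)\times(n+1)$ determinant to $\pm$ the determinant of a matrix that is upper triangular with $1$'s on the diagonal, hence equals $\pm 1$ for every $\lambda \in \mathbb{C}^n$. Therefore the matrix has full rank $n+1$ uniformly in $\lambda$, and Theorem \ref{criterion} with $m = n+1$ gives $\crk(C) \geq 2n+1$.

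The main obstacle is precisely in the last step: the lower bound on $\crk(C_0 + \sum \lambda_k C_k)$ must hold for \emph{every} choice of $\lambda$, not merely generically, since Theorem \ref{criterion} demands a uniform inequality. The isolated $1$ in the $x_1$-row, produced by the $2x_0 x_1$ term of $C_0$ (which algebraically reflects the tangency condition that defines type $\mathbf{C}$), is what guarantees this; without it the determinant would vanish at $\lambda = 0$ and only the weaker bound $m = n$ would survive.
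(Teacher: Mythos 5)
Your proposal is correct and follows essentially the same route as the paper: both apply Theorem \ref{criterion} with $p=n$ to the partial derivatives, bound the rank of the Gram matrix of $F_0+\sum\lambda_kF_k$ by the identity block for types $\mathbf{A}$ and $\mathbf{B}$, and for type $\mathbf{C}$ verify that the very same $(n+1)\times(n+1)$ matrix has nonzero determinant for all $\lambda$. Your explicit cofactor expansion along the $x_1$-row just spells out the ``direct computation'' the paper leaves to the reader.
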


\begin{theorem}\label{Classification of reducible cubics}
Let $F\in \mathbb{C}[x_0,\ldots x_n]$ be a form essentially involving $n+1$ variables which is not equivalent to a monomial. If $F$ is a reducible cubic form, then one and only one of the following holds:
\begin{itemize}
\item $F$ is equivalent to

\[x_0(x_0^2+x_1^2+\ldots +x_n^2),\]
and $\crk F=2n$.

\item $F$ is equivalent to

\[x_0(x_1^2+x_2^2+\ldots +x_n^2),\]
and $\crk F=2n$.

\item $F$ equivalent to

\[x_0(x_0x_1+x_2x_3+x_4^2+\ldots +x_n^2),\]
and $\crk F=2n+1$.
\end{itemize}
\end{theorem}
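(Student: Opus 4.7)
The plan is simply to assemble the three preceding results. Lemma \ref{complexclassification} already gives the trichotomy: any reducible cubic $F$ essentially involving $n+1$ variables and not equivalent to a monomial is equivalent over $\mathbb C$ to exactly one of $A$, $B$, $C$. The first two forms listed in the theorem are literally $A$ and $B$, so for these two cases the equality $\crk F = 2n$ follows at once by combining the upper bound in Proposition \ref{Upperbounds} with the matching lower bound in Proposition \ref{Lowerbounds}.

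For the third case I would check that the representative $x_0(x_0x_1+x_2x_3+x_4^2+\cdots+x_n^2)$ appearing in the theorem is $\textnormal{GL}(n+1,\mathbb C)$-equivalent to the representative $C=x_0(x_0x_1+x_2^2+x_3^2+x_4^2+\cdots+x_n^2)$ used in Lemma \ref{complexclassification}. This is a one-line change of variables: over $\mathbb C$ the quadrics $x_2^2+x_3^2$ and $x_2x_3$ are equivalent via $x_2\mapsto \tfrac{1}{2}(x_2+x_3)$, $x_3\mapsto \tfrac{1}{2i}(x_2-x_3)$, which fixes the remaining coordinates $x_0,x_1,x_4,\ldots,x_n$ and therefore turns $C$ into the form of the theorem. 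Once this is in place, Propositions \ref{Upperbounds} and \ref{Lowerbounds} give $\crk F = 2n+1$.

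Finally I would record why the three cases are mutually exclusive. The mutual non-equivalence of the three orbits is part of Lemma \ref{complexclassification}; one can also see it a posteriori from the rank values, since $2n\neq 2n+1$ separates $\mathbf{C}$ from $\mathbf{A}$ and $\mathbf{B}$, while $\mathbf{A}$ and $\mathbf{B}$ are distinguished by the invariant whether the quadratic factor is a cone (true for $\mathbf{B}$, false for $\mathbf{A}$). I do not expect any real obstacle: all substantive work lives in Lemma \ref{complexclassification} and Propositions \ref{Upperbounds}--\ref{Lowerbounds}, and the only small computation remaining is the change of variables that matches the two presentations of type $\mathbf C$.
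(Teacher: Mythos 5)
Your proof is correct and takes essentially the same route as the paper, whose proof is the one-line statement that the theorem follows by combining Lemma \ref{complexclassification} with Propositions \ref{Upperbounds} and \ref{Lowerbounds}. Your additional step reconciling the two presentations of type $\mathbf{C}$ (the representative with $x_2x_3$ in the theorem versus $x_2^2+x_3^2$ in the lemma) via an explicit change of variables is a worthwhile detail that the paper leaves implicit.
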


\begin{proof}
It is enough to combine Lemma \ref{complexclassification} and Propositions \ref{Upperbounds}, and \ref{Lowerbounds}.
\end{proof}

We conclude with an application of the result to forms of type {\bf C}.

\begin{proposition}\label{generalization of C}

If $F=x_0^{d-1} x_1 + x_0^{d-2} \sum_{k=2}^n x_k^2$, then $\crk(F)=(d-1)n+1$.

\end{proposition}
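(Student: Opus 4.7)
The approach is induction on $d$, with base case $d=3$ provided by Theorem \ref{Classification of reducible cubics}: when $d=3$, $F=x_0^2 x_1+x_0(x_2^2+\cdots+x_n^2)=x_0(x_0 x_1+x_2^2+\cdots+x_n^2)$ is a cubic of type $\mathbf{C}$, so $\crk(F)=2n+1=(d-1)n+1$.

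For the inductive step with $d\geq 4$, the upper bound is obtained by a monomial decomposition. Writing $F=x_0^{d-1}x_1+\sum_{k=2}^{n} x_0^{d-2} x_k^2$ and applying the Carlini--Catalisano--Geramita formula \cite{CCG} to each binary monomial, one has $\crk(x_0^{d-1} x_1)=d$ and $\crk(x_0^{d-2} x_k^2)=d-1$ for $d\geq 4$, whence $\crk(F)\leq d+(n-1)(d-1)=(d-1)n+1$.

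For the lower bound, I would apply Theorem \ref{criterion} with $p=n$ and the partial derivatives $F_1,\ldots,F_n$. One checks that $F_1=x_0^{d-1}$ and $F_k=2x_0^{d-2}x_k$ for $k\geq 2$ are linearly independent, and that
\[
F_0+\sum_{k=1}^n \lambda_k F_k = x_0^{d-3}\,\tilde Q,
\]
where
\[
\tilde Q=\lambda_1 x_0^2+(d-1)x_0 x_1+2x_0\sum_{k=2}^n \lambda_k x_k+(d-2)\sum_{k=2}^n x_k^2.
\]
The crucial step is to show that $x_0^{d-3}\tilde{Q}$ is equivalent (up to a nonzero scalar) to the form $F^{(d-1)}:=x_0^{d-2}x_1+x_0^{d-3}(x_2^2+\cdots+x_n^2)$. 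This is achieved by a sequence of invertible substitutions, all fixing $x_0$ so the prefactor $x_0^{d-3}$ is preserved: first substitute $x_k\mapsto x_k-\frac{\lambda_k}{d-2}x_0$ for $k\geq 2$ to complete the square and eliminate the $x_0 x_k$ cross terms; next shift $x_1\mapsto x_1-\frac{\mu}{d-1}x_0$, with $\mu=\lambda_1-\frac{1}{d-2}\sum_{k\geq 2}\lambda_k^2$, to kill the remaining $x_0^2$ coefficient; finally rescale $x_1$ by $\frac{d-1}{d-2}$ to match coefficients. After these transformations $\tilde Q$ becomes $(d-2)(x_0 x_1+\sum_{k\geq 2} x_k^2)$, so $x_0^{d-3}\tilde Q$ becomes $(d-2) F^{(d-1)}$. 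By the inductive hypothesis $\crk(F^{(d-1)})=(d-2)n+1$, and since linear changes of variables and nonzero scalars preserve complex rank, $\crk(F_0+\sum_k \lambda_k F_k)=(d-2)n+1$ for every choice of $\lambda_k$. Theorem \ref{criterion} then yields $\crk(F)\geq n+(d-2)n+1=(d-1)n+1$, closing the induction.

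The main obstacle is the change-of-variables computation reducing $x_0^{d-3}\tilde Q$ to a scalar multiple of $F^{(d-1)}$: the substitutions must fix $x_0$ (so the power $x_0^{d-3}$ is untouched) while simultaneously killing the $x_0^2$ coefficient, the $x_0 x_k$ cross terms, and normalizing the $(d-1)$ versus $(d-2)$ coefficients. Once this reduction is written out explicitly, the remainder of the argument is routine bookkeeping combining Theorem \ref{criterion}, the monomial rank formula of \cite{CCG}, and the inductive hypothesis.
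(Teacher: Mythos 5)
Your proof is correct and follows essentially the same route as the paper: induction on $d$ with the type $\mathbf{C}$ cubic as base case, the monomial decomposition of \cite{CCG} for the upper bound, and Theorem \ref{criterion} applied to $F_0+\sum_k\lambda_k F_k$ after recognizing it as a linear change of variables (fixing $x_0$) of $F^{(d-1)}$. Your explicit completion-of-the-square substitutions are in fact spelled out a bit more carefully than the paper's own rescaling via $y_k=\sqrt{t-1}\,(x_k+2\mu_k/\sqrt{t-1}\,x_0)$, but the argument is the same.
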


\begin{proof}

The proof is by induction on the degree of $F$. Let $F^{(r)}=x_0^{r-1} x_1 + x_0^{r-2} \sum_{k=2}^n x_k^2$. Thus, $F^{(3)}$ is a reducible cubic form of type ${\bf C}$ and the statement holds for $r=3$. Assume the statement holds for every $r\leq d-1$ and we prove it for degree $d$. Let $t=d-1$ and $F^{(d)}=x_0^{d-1} x_1 + x_0^{d-2} \sum_{k=2}^n x_k^2 =x_0^{t} x_1 + x_0^{t-1} \sum_{k=2}^n x_k^2$. \\
We have the partial derivatives $F^{(d)}_0= t x_0^{t-1} x_1 + (t-1) x_0^{t-2} \sum_{k=2}^n x_k^2,
F^{(d)}_1 = x_0^t$, and $F^{(d)}_k=2 x_0^{t-1} x_k$ for $k \geq 2$. The forms $F^{(d)}_k$ for $1\leq k\leq n$ are linearly independent. Consider the form $F^{(d)}_0+ \sum \limits_{k=1}^n \mu_k F^{(d)}_k$ for any $\mu_k\in \mathbb C$. We have
$$
F^{(d)}_0+ \sum_{k=1}^n \mu_k F^{(d)}_k= t x_0^{t-1} x_1 + (t-1) x_0^{t-2} \sum_{k=2}^n x_k^2 + \mu_1 x_0^{t}+
\sum_{k=2}^n \mu_k 2 x_0^{t-1} x_k
$$
$$
= x_0^{t-1} (\mu_1 x_0 + t x_1)  + x_0^{t-2} \sum_{k=2}^n( (t-1) x_k^2 + 2 \mu_k x_0 x_k)
$$
$$
= x_0^{t-1} y_1 + x_0^{t-2} \sum_{k=2}^n y_k^2,
$$
\noindent where $y_1=\mu_1 x_0 + t x_1$ and $y_k= \sqrt{t-1} (x_k + 2 \mu_k/\sqrt{t-1} x_0)$. By induction and by Theorem \ref{criterion}, we have $\crk(F^{(d)})\geq (d-1)n+1$ for all $d\geq 3$. On the other hand, $\crk(F^{(d)})\leq (d-1)n+1$, since this is the bound given by the monomials in $F$.

\end{proof}

\section{Real rank of reducible cubic forms}\label{realsection}

In this section we study the real rank of reducible cubic forms in $\mathbb{R}[x_0,\ldots x_n]$. Since the rank is invariant under the action of $\textnormal{GL}(n+1,\mathbb{R})$ we only need to study the rank of non-equivalent forms. The real rank of all of cubics in three variables are determined in \cite{Ban}. \\


We will use the following classification result.

\begin{lemma}\label{realclassification} If $F\in \mathbb{R}[x_0,\ldots x_n]$ is a reducible cubic form essentially involving $n+1$ variables, then $F$ is equivalent to one and only one of the following:
\begin{enumerate}[label=(\roman*)]
\item\label{onereal} \[x_0(\sum_{i=1}^n \epsilon_i x_i^2),\]
where $\epsilon_i\in\lbrace -1,+1\rbrace$ for $1\leq i\leq n$;
\item\label{tworeal} \[x_0(\sum_{i=0}^n \epsilon_i x_i^2),\]
where $\epsilon_i\in\lbrace -1,+1\rbrace$ for $0\leq i\leq n$;
\item\label{threereal} \[(\alpha x_0+x_p)(\sum_{i=0}^n \epsilon_i x_i^2),\]
for $\alpha\neq 0$, where $\epsilon_0=\ldots=\epsilon_{p-1}=1$ and $\epsilon_p=\ldots=\epsilon_n=-1$ for $1\leq p\leq n$.
\end{enumerate}
\end{lemma}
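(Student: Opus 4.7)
The plan is to write $F=LQ$ and successively normalize, first $Q$ via Sylvester's law of inertia over $\mathbb{R}$ and then $L$ using the stabilizer of $Q$ combined with the overall scaling freedom. After applying a suitable element of $\textnormal{GL}(n+1,\mathbb{R})$, one has $Q=\sum_{i=0}^n\epsilon_i x_i^2$ with $\epsilon_i\in\{-1,0,+1\}$. A short auxiliary observation --- that a quadratic form in $r$ variables multiplied by a single linear form gives a cubic essentially involving at most $r+1$ variables --- shows that the hypothesis on the number of essential variables forces the rank of $Q$ to be either $n$ or $n+1$.

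If $Q$ has rank $n$, I would arrange $Q$ to involve only $x_0,\dots,x_{n-1}$. Essentiality then forces $L$ to depend on $x_n$ with nonzero coefficient, so the change of variables replacing $x_n$ by $L$ (keeping the other variables fixed) brings $F$ to $x_n\cdot\sum_{i=0}^{n-1}\epsilon_i x_i^2$; after permuting variables and absorbing a scalar into $\lambda$ this is precisely item~\ref{onereal}.

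If $Q$ has rank $n+1$, then $Q$ is nondegenerate of some signature $(p,n+1-p)$, and its stabilizer (modulo rescaling of $Q$, which can be absorbed into $\lambda$) is the real orthogonal group $\textnormal{O}(p,n+1-p)$. I would then classify the orbit of $L$ under this group together with the scaling freedom by appealing to Witt's extension theorem: the relevant invariant is the sign of the dual value $Q^*(L)=\sum_i\epsilon_i a_i^2$, where $L=\sum_i a_i x_i$. The three possibilities --- positive, negative, and zero (the last possible only when $Q$ is indefinite) --- give three orbits. The first two yield, after rotating $L$ onto the appropriate coordinate axis and rescaling, representatives of the form $x_j\cdot Q$ with $\epsilon_j=\pm 1$, which is item~\ref{tworeal}. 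In the isotropic case $Q^*(L)=0$, a hyperbolic-plane argument for a pair of positive and negative indices brings $L$ into the shape $\alpha x_0+x_p$ with $\alpha\neq 0$, giving item~\ref{threereal}.

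The main obstacle I anticipate is the uniqueness statement ("one and only one"): one must show that the three families are genuinely disjoint under $\textnormal{GL}(n+1,\mathbb{R})$-equivalence. This reduces to checking that the rank and signature of $Q$ (the signature only up to the global sign flip $Q\mapsto -Q$ that accompanies $F\mapsto -F$), together with the sign of $Q^*(L)$, are genuine invariants of the $\textnormal{GL}(n+1,\mathbb{R})$-equivalence class of $F$, and that distinct choices of the listed normal forms yield distinct values of these invariants.
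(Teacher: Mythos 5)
Your strategy for the existence part coincides with the paper's: factor $F=LQ$, reduce $Q$ to $\sum_i\epsilon_ix_i^2$ by Sylvester, observe that essentiality forces $\mathrm{rk}(Q)\in\{n,n+1\}$, handle the degenerate case by sending $L$ to the missing variable, and in the nondegenerate case move $L$ by the stabilizer of $Q$. The one genuine difference is the group used in the last step: the paper acts only by the compact product $\textnormal{O}(p,\mathbb{R})\times\textnormal{O}(n+1-p,\mathbb{R})$, which rotates the positive and negative blocks separately and hence can only bring $L$ to $\alpha x_0+\beta x_p$, leaving $\alpha$ as a parameter; you act by the full indefinite group $\textnormal{O}(p,n+1-p)$ and invoke Witt's extension theorem, so the orbit of $L$ is controlled by the sign of $Q^*(L)$ alone. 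Your argument is correct and in fact sharper: it shows that whenever $Q^*(L)\neq 0$ the form can be pushed all the way into type \ref{tworeal}, and that in the isotropic case one may take $\alpha=1$.

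That extra strength is precisely why the disjointness you flag as the main obstacle cannot be established as literally stated. For $\alpha\neq 0,\pm1$ one has $Q^*(\alpha x_0+x_p)=\alpha^2-1\neq0$, so by your own Witt argument the type-\ref{threereal} form $(\alpha x_0+x_p)(\sum_i\epsilon_ix_i^2)$ is equivalent to a type-\ref{tworeal} form; concretely, a boost in the $(x_0,x_1)$-plane carries $(2x_0+x_1)(x_0^2-x_1^2-x_2^2)$ to $\sqrt{3}\,x_0(x_0^2-x_1^2-x_2^2)$. The three families therefore overlap, and the paper's own proof in fact only establishes the ``at least one'' direction. A second caveat for your uniqueness plan: the invariants you propose (signature of $Q$, sign of $Q^*(L)$) are attached to a factorization, not to $F$, and the factorization $F=LQ$ fails to be essentially unique exactly when $F$ splits into three linear factors; e.g.\ $x_0^2x_1=x_1\cdot x_0^2=x_0\cdot(x_0x_1)$ gives a degenerate $Q$ in one grouping and a nondegenerate $Q$ with isotropic $L$ in the other, so for $n=1$ types \ref{onereal} and \ref{threereal} also meet. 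So keep your existence argument, but either weaken the conclusion to ``at least one'', or restate case \ref{threereal} with $\alpha=\pm1$ and prove disjointness only for $n\geq 2$ (where $Q$ is forced to be irreducible or the splitting is harmless), after checking that your invariants do not depend on the chosen factorization there.
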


\begin{proof}
Let $F=LQ$ where $L$ ia a linear form and $Q$ is a quadratic form. Clearly $F$ is equivalent to $L'\sum_{i=0}^n \epsilon_i x_i^2$ where $\epsilon_0\in\lbrace -1,0,1\rbrace$ and $\epsilon_1,\ldots,\epsilon_n\in\lbrace -1,1\rbrace$. Note that at most one of the coefficients $\epsilon_i$ could be zero, otherwise $F$ would essentially involve less than $n+1$ variables.

If $\epsilon_0=0$, then $L'$ is a linear form linearly independent with $x_1,\ldots,x_n$, otherwise $F$ would essentially involve less than $n+1$ variables. Thus, $F$ is equivalent to
$$x_0(\sum_{i=1}^n \epsilon_i x_i^2),$$
and hence $F$ is equivalent to the form in \ref{onereal}.

If all the coefficients $\epsilon_i$ are non zero and have the same sign, then $F$ is equivalent to $L'(\sum_{i=0}^n x_i^2)$. Note that the quadratic part is stabilized by the orthogonal group $\textnormal{O}(n+1,\mathrm{R})\subset \textnormal{GL}(n+1,\mathrm{R})$. Since the action of $\textnormal{O}(n+1)$ is transitive, $F$ is equivalent to
$$x_0\sum_{i=1}^n x_i^2,$$
and hence $F$ is equivalent to the form in \ref{tworeal}.

If all the coefficients $\epsilon_i$ are non zero and do not have the same sign, then $F$ is equivalent to
$$L'(x_0^2+\ldots+x_{p-1}^2-x_p^2\ldots-x_{n}^2)$$
for some $p$, $1\leq p\leq n$. Note that the quadratic part is stabilized by $\textnormal{O}(p,\mathbb R)\times \textnormal{O}(n+1-p,\mathbb R)$. Since the actions of $\textnormal{O}(p,\mathbb R)$ and $\textnormal{O}(n+1-p,\mathbb R)$ are transitive, $F$ is equivalent to
$$(\alpha x_0+\beta x_p)(\sum_{i=0}^n \epsilon_i x_i^2),$$
for $\alpha,\beta\in\mathbb{R}$. Hence, if $\alpha=0$ or $\beta=0$, then $F$ is equivalent to the form in \ref{tworeal}. If $\alpha\neq 0$ and $\beta\neq 0$, then $F$ is equivalent to the form in \ref{threereal}. This concludes the proof.

\end{proof}

We now study the real ranks of the forms given in Lemma \ref{realclassification}. We begin with case \ref{onereal} of the lemma.

\begin{proposition}\label{Type 1}
If $F=x_0(\sum_{i=1}^n \epsilon_i x_i^2)$ where $\epsilon_i\in\lbrace -1,+1\rbrace$, then $2n\leq\rrk(F)\leq 2n+1$. Moreover, if $\sum_{i=1}^n\epsilon_i=0$, then $\rrk(F)=2n$.
\end{proposition}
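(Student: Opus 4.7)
The plan is to prove the two bounds separately and then verify that the second upper-bound improvement comes from a cancellation in an explicit decomposition.

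For the lower bound $\rrk(F)\geq 2n$, I would apply Theorem \ref{criterion} directly over $\mathbb{R}$. Computing partial derivatives gives $F_0=\sum_{i=1}^n\epsilon_i x_i^2$ and $F_k=2\epsilon_k x_0 x_k$ for $1\leq k\leq n$. The forms $F_1,\ldots,F_n$ are plainly linearly independent since each involves a distinct monomial $x_0 x_k$. For any $\lambda_1,\ldots,\lambda_n\in\mathbb{R}$, the quadratic form $F_0+\sum_{k=1}^n\lambda_k F_k$ has associated symmetric matrix
\[
\begin{pmatrix}
0 & \epsilon_1\lambda_1 & \cdots & \epsilon_n\lambda_n\\
\epsilon_1\lambda_1 & \epsilon_1 & & \\
\vdots & & \ddots & \\
\epsilon_n\lambda_n & & & \epsilon_n
\end{pmatrix},
\]
whose bottom-right $n\times n$ block is diagonal with nonzero entries $\epsilon_1,\ldots,\epsilon_n$. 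Hence the rank of this matrix is at least $n$, so $\rrk(F_0+\sum_k\lambda_k F_k)\geq n$ for every choice of $\lambda_k$. Applying Theorem \ref{criterion} with $m=n$ and $p=n$ yields $\rrk(F)\geq 2n$. (Alternatively, one can deduce this from $\rrk(F)\geq\crk(F)=2n$, since $F$ is $\mathbb{C}$-equivalent to the type $\mathbf{B}$ form by rescaling $x_i\mapsto ix_i$ on the indices with $\epsilon_i=-1$, and invoke Proposition \ref{Lowerbounds}.)

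For the upper bound $\rrk(F)\leq 2n+1$, I would write an explicit Waring decomposition based on the elementary identity
\[
x_0 x_i^2=\tfrac{1}{6}\bigl[(x_0+x_i)^3+(x_0-x_i)^3\bigr]-\tfrac{1}{3}x_0^3.
\]
Multiplying by $\epsilon_i$ and summing over $1\leq i\leq n$ gives
\[
F=\sum_{i=1}^n\tfrac{\epsilon_i}{6}\bigl[(x_0+x_i)^3+(x_0-x_i)^3\bigr]-\tfrac{1}{3}\Bigl(\sum_{i=1}^n\epsilon_i\Bigr)x_0^3.
\]
Over $\mathbb{R}$ each signed cube is itself a cube (odd-degree scalars absorb into the linear form), so this expresses $F$ as a sum of at most $2n+1$ real cubes, giving $\rrk(F)\leq 2n+1$.

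The ``moreover'' part is then immediate: if $\sum_{i=1}^n\epsilon_i=0$, the coefficient of $x_0^3$ vanishes in the identity above, leaving exactly $2n$ summands. Combined with the lower bound, this forces $\rrk(F)=2n$. The main subtlety to watch is making sure that the identity is applied correctly in both signs $\epsilon_i=\pm 1$ and that the $x_0^3$ contributions always cancel only when the signature is balanced; there is no hidden obstacle in the calculation, but the asymmetry between the balanced and unbalanced cases explains why the general bound $2n+1$ cannot be improved by this approach — tightening it in the unbalanced case would require a genuinely different decomposition strategy.
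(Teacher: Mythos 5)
Your proposal is correct and follows essentially the same route as the paper: the upper bound and the ``moreover'' cancellation rest on the identical identity $x_0x_i^2=\tfrac{1}{6}[(x_0+x_i)^3+(x_0-x_i)^3]-\tfrac{1}{3}x_0^3$. For the lower bound the paper simply invokes $\rrk(F)\geq\crk(F)=2n$ via Theorem \ref{Classification of reducible cubics}, while you also give a direct real application of Theorem \ref{criterion}; both are valid and you note the paper's version as an alternative.
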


\begin{proof}
Note that
\begin{equation}\label{x0xisquareequation}
x_0x_i^2=1/6[(x_0+x_i)^3+(x_0-x_i)^3]-1/3x_0^3.
\end{equation}
Since $F=\sum_{i=1}^n \epsilon_i x_0x_i^2$, we can find a real sum of powers decomposition of $F$ involving at most $2n+1$ cubes, and thus $\rrk(F)\leq 2n+1$. By Theorem \ref{Classification of reducible cubics}, we have $2n=\crk(F)\leq\rrk(F)$ and the inequality is hence proved.
To prove the equality, note that the condition $\sum_{i=1}^n\epsilon_i=0$ yields that the coefficient of the monomial $x_0^3$ is zero in the decomposition of $F$ given by equation \eqref{x0xisquareequation}. Hence we have $\rrk(F)\leq 2n$  and this concludes the proof.

\end{proof}

We now consider case \ref{tworeal} of Lemma \ref{realclassification}.

\begin{proposition}\label{Type 2}
If $F=x_0(\sum_{i=0}^n \epsilon_i x_i^2)$ where $\epsilon_i\in\lbrace -1,+1\rbrace$, then $2n\leq\rrk(F)\leq 2n+1$. If $\epsilon_0=\ldots=\epsilon_n$, then $\rrk(F)=2n$. If $\epsilon_0\neq \epsilon_1$ and $\epsilon_1=\ldots=\epsilon_n$, then $\rrk(F)=2n+1$.
\end{proposition}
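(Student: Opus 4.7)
The plan is to handle the upper bound $\rrk(F)\le 2n+1$ uniformly and then treat the two equality cases separately. For the upper bound, apply identity \eqref{x0xisquareequation} to each summand $\epsilon_i x_0 x_i^2$ in $F=\epsilon_0 x_0^3+\sum_{i=1}^n\epsilon_i x_0 x_i^2$ and collect the $x_0^3$ contributions to obtain the real decomposition
\[
F=\Bigl(\epsilon_0-\tfrac{1}{3}\sum_{i=1}^n\epsilon_i\Bigr)x_0^3+\tfrac{1}{6}\sum_{i=1}^n\epsilon_i\bigl[(x_0+x_i)^3+(x_0-x_i)^3\bigr],
\]
a sum of at most $2n+1$ cubes. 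For the lower bound $\rrk(F)\ge 2n$, observe that over $\mathbb C$ the rescaling $x_i\mapsto x_i/\sqrt{\epsilon_i}$ sends $F$ to a nonzero scalar multiple of the type {\bf A} cubic $x_0\sum_{i=0}^n x_i^2$; hence Theorem \ref{Classification of reducible cubics} gives $\crk(F)=2n$, and since any real decomposition is a fortiori complex, $\rrk(F)\ge\crk(F)=2n$.

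For the case $\epsilon_0=\ldots=\epsilon_n$, I may assume all $\epsilon_i=1$ (otherwise replace $F$ by $-F$, which has the same real rank since $\lambda L^3=(-\lambda)(-L)^3$). Then $F$ coincides with the type {\bf A} cubic $A^{(n)}$, and the inductive decomposition in the proof of Proposition \ref{Upperbounds} uses only real coefficients throughout: the base case $A^{(1)}=\frac{1}{6\sqrt{3}}[(\sqrt{3}x_0-x_1)^3+(\sqrt{3}x_0+x_1)^3]$ is real, and the inductive step $A^{(n)}=A'+A''$ produces an $A''$ that is obtained from a type {\bf A} cubic in one fewer variable by a real rescaling. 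Running the same induction over $\mathbb R$ therefore yields $\rrk(F)\le 2n$ and, combined with the lower bound, gives $\rrk(F)=2n$.

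For the case $\epsilon_0\ne\epsilon_1=\ldots=\epsilon_n$, after negating if necessary I take $\epsilon_0=1$ and $\epsilon_1=\cdots=\epsilon_n=-1$, so $F_0=3x_0^2-\sum_{i=1}^n x_i^2$ and $F_i=-2x_0 x_i$. The $F_1,\ldots,F_n$ are manifestly linearly independent, and the Gram matrix of $F_0+\sum_{i=1}^n\lambda_i F_i$ is the bordered matrix
\[
M(\lambda)=\begin{pmatrix}3 & -\lambda_1 & -\lambda_2 & \cdots & -\lambda_n\\ -\lambda_1 & -1 & 0 & \cdots & 0\\ -\lambda_2 & 0 & -1 & \cdots & 0\\ \vdots & \vdots & \vdots & \ddots & \vdots\\ -\lambda_n & 0 & 0 & \cdots & -1\end{pmatrix}.
\]
A Schur complement calculation gives $\det M(\lambda)=(-1)^n\bigl(3+\sum_{i=1}^n\lambda_i^2\bigr)$, which is strictly nonzero for every real $\lambda$. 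Since the real Waring rank of a real quadratic form equals its matrix rank (by Sylvester diagonalization), $\rrk\bigl(F_0+\sum\lambda_i F_i\bigr)=n+1$ for all $\lambda\in\mathbb R^n$, and Theorem \ref{criterion} with $p=n$ and $m=n+1$ delivers $\rrk(F)\ge (n+1)+n=2n+1$, matching the upper bound.

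The crux of the argument---and the only place where the real and complex settings genuinely diverge---is the strict positivity of $3+\sum\lambda_i^2$ over $\mathbb R$. Over $\mathbb C$ this expression can be made to vanish by imaginary $\lambda_i$, dropping the auxiliary quadratic's rank from $n+1$ to $n$ and recovering only the complex bound $2n$; it is the signature mismatch between the linear factor $x_0$ and the indefinite quadratic factor that blocks this cancellation in $\mathbb R$ and produces the extra $+1$ in the real rank.
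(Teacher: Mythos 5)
Your proposal is correct and follows essentially the same route as the paper: the identity $x_0x_i^2=\tfrac{1}{6}[(x_0+x_i)^3+(x_0-x_i)^3]-\tfrac{1}{3}x_0^3$ for the upper bound, the complex rank $2n$ for the lower bound, the real inductive splitting of the type \textbf{A} cubic for the definite case, and the bordered Gram matrix with determinant $(-1)^n(3+\sum\lambda_i^2)$ fed into Theorem \ref{criterion} for the case $\epsilon_0\neq\epsilon_1=\cdots=\epsilon_n$. The only differences are cosmetic (you split $A^{(n)}$ as in Proposition \ref{Upperbounds} rather than with the symmetric $\tfrac12 x_0^2$ split, and you make the Schur-complement computation and the signature obstruction explicit), so no further changes are needed.
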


\begin{proof}
Using equation \eqref{x0xisquareequation} we get that $\rrk(F)\leq 2n+1$. Since $2n=\crk(F)\leq\rrk(F)$ the inequality is proved.
Suppose that $\epsilon_0=\ldots=\epsilon_n$; we proceed by induction on $n$. Note that it is enough to consider $\epsilon_0=1$. If $n=1$, then $F=x_0(x_0^2+x_1^2)$ and, by equation \eqref{x0cubopiux0x1square} we have that $\rrk(F)=2$. Now suppose that, for every $2\leq k\leq n-1$, $F^{(k)}=x_0(x_0^2+\ldots+x_k^2)$ has real rank $2k$. Note that $F^{(k)}=A'+A''$, where
$$A'=x_0(1/2x_0^2+x_1^2)$$
and
$$A''=x_0(1/2x_0^2+x_2^2+\ldots+x_n^2).$$
Note that $A''$ is equivalent to $F^{(n-1)}$. Thus, $\rrk(A'')=2(n-1)$ by induction. By the $n=1$ case we have that $\rrk(A')=2$, and hence we conclude that $\rrk(F^{(n)})\leq 2n$. Recalling that $2n\leq\crk(F)$, this concludes the proof.

Suppose that $\epsilon_0\neq\epsilon_1$ and $\epsilon_1=\ldots=\epsilon_n$; it is enough to consider $\epsilon_0=1$. Let $M$ be the matrix associated to the quadratic form
$$
F_0+\sum_{i=1}^n \lambda_i F_i,
$$
thus
$$
M=\begin{pmatrix}
    3     & -\lambda_1  & -\lambda_2 & \dots & \ldots & -\lambda_n \\
    -\lambda_1 &  -1 & 0 & \dots & 0  & 0 \\
    \vdots & \vdots & -1 & \ldots & 0 & 0 \\
     -\lambda_p     & 0 & \ldots & -1 & 0 &\vdots \\
    \vdots &  0 & 0 & \ldots & \ddots & 0 \\
    -\lambda_n & 0 & \ldots & \ldots & \ldots & -1
\end{pmatrix}.
$$

\noindent The determinant of $M$ is $(-1)^{n}(\sum_{i=1}^n \lambda_i^2+3)$ which is always non zero for real values of the parameters $\lambda_i$. By Theorem \ref{criterion}, we conclude that $2n+1\leq \rrk(F)$, and hence $\rrk(F)=2n+1$. This concludes the proof.

\end{proof}

\begin{remark}

Note that we know the real rank of $x_0(x_0^2+x_1^2+\ldots +x_n^2)$, but we do not know in general the real rank of $x_0(x_1^2+\ldots +x_n^2)$. To see why the latter case is more difficult, consider the case $n=2$. The form $F=x_0(x_1^2+x_2^2)$ is a monomial with distinct factors and thus $\crk(F)=4$ by \cite{CCG}. However, $F$ is not a monomial over the reals. We can prove that $\rrk(F)=5$ using apolarity as follows, see \cite{Ban} for an alternative proof. Assume by contradiction that $\rrk(F)=4$. Since the complex rank of $F$ is $4$, by Proposition 21 in \cite{BBT}, we have that there is a set of four points apolar to $F$ which is the complete intersection of two conics, and thus the points lie on three reducible conics. If we assume that $\rrk(F)=4$ these points have real coordinates, and thus the reducible conics are of the form $LM$ for real linear forms $L$ and $M$. However, a direct computation shows that the ideal $F^\perp$ does not contain degree two polynomial of such a form hence a contradiction.
\end{remark}

We now consider case \ref{threereal} of Lemma \ref{realclassification}.

\begin{proposition}\label{Type 3}

If $(\alpha x_0+x_p)(\sum_{i=0}^n \epsilon_i x_i^2)$, where $\alpha\neq 0$, $\epsilon_0=\ldots=\epsilon_{p-1}=1$, and $\epsilon_p=\ldots=\epsilon_n=-1$ for $2\leq p\leq n$, then $2n\leq\rrk(F)\leq 2n+3$. If $\alpha=-1$ or $\alpha=1$, then $2n+1\leq\rrk(F)\leq 2n+3$.

\end{proposition}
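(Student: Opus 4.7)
The plan is to establish the lower and upper bounds separately. For the lower bounds, I would exploit the complex classification of Lemma \ref{complexclassification} together with the rank formulas of Theorem \ref{Classification of reducible cubics} and the trivial inequality $\rrk(F) \geq \crk(F)$. Since the quadratic factor $Q = \sum_{i=0}^n \epsilon_i x_i^2$ is non-degenerate of rank $n+1$, the complex substitution $y_i = x_i$ for $i < p$ and $y_i = \sqrt{-1}\, x_i$ for $i \geq p$ carries $Q$ to $\sum_{i=0}^n y_i^2$ and sends $L := \alpha x_0 + x_p$ to $\alpha y_0 - \sqrt{-1}\, y_p$. By Remark \ref{orthogonalremark}, the image of $L$ lies in the tangent orbit (so $F$ is of type \textbf{C}) precisely when $\alpha^2 + (-\sqrt{-1})^2 = \alpha^2 - 1 = 0$, i.e.\ when $\alpha = \pm 1$; otherwise it lies in the non-tangent orbit, making $F$ of type \textbf{A}. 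Type \textbf{B} is excluded because $Q$ has full rank and $V(Q)$ is not a cone. Theorem \ref{Classification of reducible cubics} then gives $\crk(F) = 2n$ when $\alpha \neq \pm 1$ and $\crk(F) = 2n + 1$ when $\alpha = \pm 1$, which yields both stated lower bounds.

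For the upper bound $\rrk(F) \leq 2n + 3$, I would exhibit an explicit real Waring decomposition. Set $L = \alpha x_0 + x_p$; the polynomial identity underlying \eqref{x0xisquareequation}, valid for any two linear forms $a$ and $b$, specialises to
\[
L x_i^2 = \frac{1}{6}\bigl[(L + x_i)^3 + (L - x_i)^3\bigr] - \frac{1}{3} L^3
\]
for each $i = 0, 1, \ldots, n$. Summing with signs $\epsilon_i$ gives
\[
F = \sum_{i=0}^n \epsilon_i L x_i^2 = \frac{1}{6}\sum_{i=0}^n \epsilon_i \bigl[(L + x_i)^3 + (L - x_i)^3\bigr] - \frac{1}{3}\Bigl(\sum_{i=0}^n \epsilon_i\Bigr) L^3,
\]
which writes $F$ as a real sum of at most $2(n+1) + 1 = 2n + 3$ cubes of linear forms with real coefficients. (Incidentally, the coefficient of $L^3$ is $-(2p - n - 1)/3$, so the bound drops to $2n+2$ when $n = 2p-1$, although this refinement is not needed here.)

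The step I expect to be the main obstacle is the identification of the complex type of $F$: after diagonalising $Q$ over $\mathbb{C}$, the linear factor acquires imaginary coefficients, and one must carefully evaluate the sum-of-squares tangency criterion of Remark \ref{orthogonalremark} on the transformed form. Once that computation reduces to $\alpha^2 = 1$, the case split between types \textbf{A} and \textbf{C} is immediate, and the rest of the argument amounts to assembling the two pieces above.
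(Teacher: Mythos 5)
Your proposal is correct and follows essentially the same route as the paper: the upper bound $2n+3$ comes from applying the identity $Lx_i^2=\tfrac{1}{6}[(L+x_i)^3+(L-x_i)^3]-\tfrac{1}{3}L^3$ with $L=\alpha x_0+x_p$ to each square and collecting the $L^3$ terms, and the lower bounds come from $\rrk(F)\geq\crk(F)$ together with the observation that $F$ is of complex type \textbf{C} exactly when $\alpha=\pm 1$. Your explicit verification of the tangency condition $\alpha^2-1=0$ after complex diagonalisation is a welcome detail that the paper leaves implicit.
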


\begin{proof}
We have
$$
F=(\alpha x_0+x_p)(\epsilon_0x_0^2+\epsilon_px_p^2)+(\alpha x_0+x_p)(\sum_{i=1}^n \epsilon_i x_i^2),
$$
where the first summand can be written as the sum of four cubes of linear forms using \eqref{x0cubopiux0x1square}. Analogously, the second summand can be written as a sum of $n-1$ polynomials of real rank two and the cubic form $(\alpha x_0+x_p)^3$. Hence, we have $\rrk(F)\leq 2(n-1)+5=2n+3$. The complex rank of $F$ gives the required lower bound and so we get $2n\leq\rrk(F)\leq 2n+3$.

If $\alpha=-1$ or $\alpha=+1$, then $F$ is of type {\bf C} and hence $2n+1\leq\rrk(F)\leq 2n+3$. This concludes the proof.

\end{proof}

In conclusion we have the following result.

\begin{theorem}\label{realthm}
If $F\in \mathbb{R}[x_0,\ldots x_n]$ is a reducible cubic form essentially involving $n+1$ variables, then one and only one of the following holds:
\begin{itemize}

\item $F$ is equivalent to $x_0(\sum_{i=1}^n \epsilon_i x_i^2)$, where $\epsilon_i\in\lbrace -1,+1\rbrace$ for $1\leq i\leq n$ and
\[2n\leq\rrk(F)\leq 2n+1.\]
Moreover, if $\sum_{i=1}^n\epsilon_i=0$, then $\rrk(F)=2n$.

\item $F$ is equivalent to $x_0(\sum_{i=0}^n \epsilon_i x_i^2)$, where $\epsilon_i\in\lbrace -1,+1\rbrace$ for $0\leq i\leq n$ and
\[2n\leq\rrk(F)\leq 2n+1.\]
Moreover, if $\epsilon_0=\ldots=\epsilon_n$, then $\rrk(F)=2n$. If $\epsilon_0\neq \epsilon_1$ and $\epsilon_1=\ldots=\epsilon_n$, then $\rrk(F)=2n+1$.

\item $F$ is equivalent to $(\alpha x_0+x_p)(\sum_{i=0}^n \epsilon_i x_i^2)$, for $\alpha\neq 0$, where $\epsilon_0=\ldots=\epsilon_{p-1}=1$ and $\epsilon_p=\ldots=\epsilon_n=-1$ for $1\leq p\leq n$, and
\[2n\leq\rrk(F)\leq 2n+3.\]
Moreover, if $\alpha=-1$ or $\alpha=1$, then $2n+1\leq\rrk(F)\leq 2n+3$.
\end{itemize}
\end{theorem}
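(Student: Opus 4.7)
The plan is straightforward: Theorem~\ref{realthm} is essentially a compendium of the classification result and the three rank propositions already established in this section, so the proof amounts to stitching them together. First, invoke Lemma~\ref{realclassification}, which lists all reducible cubics essentially involving $n+1$ real variables, up to $\mathrm{GL}(n+1,\mathbb{R})$-equivalence, in three mutually exclusive families matching verbatim the three bullets of the theorem. Since the real Waring rank is an invariant of this equivalence, every such $F$ reduces to one of these normal forms for the purpose of computing $\rrk(F)$.

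Second, apply the appropriate proposition in each case. For the first family, Proposition~\ref{Type 1} supplies $2n \leq \rrk(F) \leq 2n+1$ together with the sharpening $\rrk(F) = 2n$ when $\sum_{i=1}^n \epsilon_i = 0$. For the second, Proposition~\ref{Type 2} supplies the same sandwich $2n \leq \rrk(F) \leq 2n+1$, together with $\rrk(F) = 2n$ when all signs coincide and $\rrk(F) = 2n+1$ when $\epsilon_0$ differs from a common value $\epsilon_1 = \cdots = \epsilon_n$. For the third, Proposition~\ref{Type 3} gives $2n \leq \rrk(F) \leq 2n+3$, improved to $2n+1 \leq \rrk(F) \leq 2n+3$ when $\alpha = \pm 1$.

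There is essentially no obstacle left, since the substance of the theorem is already distilled into Lemma~\ref{realclassification} and Propositions~\ref{Type 1}, \ref{Type 2}, \ref{Type 3}. The only bookkeeping point is to check that the three families in Lemma~\ref{realclassification} are pairwise inequivalent over $\mathbb{R}$, so that the ``one and only one'' claim in Theorem~\ref{realthm} is inherited from the corresponding claim in the lemma. This is immediate from the fact that both the rank and the real signature of the quadratic factor, as well as whether the hyperplane defined by $L$ is tangent to the quadric $V(Q)$, are invariants of the $\mathrm{GL}(n+1,\mathbb{R})$-action.
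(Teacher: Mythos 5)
Your proposal is correct and follows exactly the paper's own argument: the theorem is proved by combining Lemma~\ref{realclassification} with Propositions~\ref{Type 1}, \ref{Type 2}, and \ref{Type 3}. Your additional remark on why the three families are pairwise inequivalent (signature and tangency invariants) is a reasonable bookkeeping point that the paper leaves implicit inside the lemma.
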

\begin{proof}
It is enough to consider Lemma \ref{realclassification} and to combine Propositions \ref{Type 1},\ref{Type 2}, and \ref{Type 3}.
\end{proof}

\begin{Acknowledgement}

The third author thanks Riccardo Re and Zach Teitler for helpful discussions. The first author acknowledges the financial support of Monash University, and of the Simons Institute. The first author is a members of  the GNSAGA group of INdAM. The first and third authors are supported by the framework of PRIN 2010/11 ``Geometria delle variet\`a algebriche'', cofinanced by MIUR.

\end{Acknowledgement}


\end{document}